\newtheorem{theorem}{Theorem}
\newtheorem{lemma}[theorem]{Lemma}
\newtheorem{corollary}[theorem]{Corollary}
\newtheorem{proposition}[theorem]{Proposition}
\theoremstyle{definition}
\newtheorem{definition}[theorem]{Definition}
\theoremstyle{remark}
\newtheorem{remark}[theorem]{Remark}
\begin{document}
\papertitle[Proalgebraic crossed modules]{Proalgebraic crossed modules of quasirational presentations}
\papertitle{Proalgebraic crossed modules of quasirational presentations}
\paperauthor{Andrey Mikhovich}
\paperaddress{Moscow State University} 
\paperemail{mikhandr@mail.ru}


\makepapertitle

\Summary{
For every quasirational (pro-$p$)relation module $R$ we construct the so called $p$-adic rationalization, which is the pro-fd-module
$\overline{R}\widehat{\otimes}\mathbb{Q}_p= \varprojlim R/[R,R\mathcal{M}_n]\otimes\mathbb{Q}_p$, and prove the isomorphism $\overline{R}\widehat{\otimes}\mathbb{Q}_p\cong\overline{R^{\wedge}_w}(\mathbb{Q}_p)$, where $\overline{R^{\wedge}_w}(\mathbb{Q}_p)$ stands for the rational points of the abelianization of the continuous $p$-adic Malcev completion of $R$. We show how $\overline{R^{\wedge}_{w}}$ embeds into a sequence which arises from a certain prounipotent crossed module. The latter can be seen as concrete examples of proalgebraic homotopy types.

We provide the Identity Theorem for pro-$p$-groups, giving a positive feedback to the question of Serre.}


\section{Schematization and proalgebraic crossed modules}
\
What is homotopy theory? After Quillen we may regard this as formal settings of model categories and their equivalences. The Quillen-equivalence between homotopy categories of compactly generated Hausdorff spaces and simplicial sets is the brightest example (so the importance of simplicial sets). However Grothendieck's meditative dream was that combinatorial homotopy theory lies not far from geometry in its schematic reincarnation (just as homotopical invariants of smooth manifolds). Recently this idea was realized in B.To\"{e}n's theory of schematic homotopy types. The constructive form gives for a pair $(X,k)$ (where $X$ be a connected simplicial set and $k$ be a field) some schematic homotopy type, which is a simplicial proalgebraic group $(GX)^{\wedge}_{alg}$, where $G$ is a Kan loop-group functor, $(GX)^{\wedge}_{alg}$ is the proalgebraic completion of a free simplicial group $GX$.
 Lets look at old problems of two-dimensional combinatorial homotopy theory using schematic glasses. First, we apply Kan loop-group functor to connected two-dimensional simplicial set and obtain a free simplicial group degenerated in dimensions greater than one. Kan results gives a $CW$-basis for this free simplicial group, so we get some free simplicial group also degenerated in dimensions greater than one
\begin{equation}
\xymatrix{
& \ar@<1ex>[r]\ar@<-1ex>[r] \ar[r] & {F(X\cup Y)} \ar@<0ex>[r]^{d_0}\ar@<-2ex>[r]^{d_1} & F(X) \ar@<-2ex>[l]_{s_0} }, \label{2}
\end{equation} where $d_0, d_1, s_0$, on $x \in X, y \in Y, r_y \in F(X)$ defined by:
$d_1(x)=x,  d_1(y)=r_y,$
$d_0(x)=x,  d_0(y)=1,$
$s_0(x)=x.$
The corresponding 2-reduced simplicial group \begin{equation*}
\xymatrix{
& {F(X\cup Y)} \ar@<0ex>[r]^{d_0}\ar@<-2ex>[r]^{d_1} & F(X) \ar@<-2ex>[l]_{s_0} }
\end{equation*} is the standard object for study in combinatorial group theory. We use the proalgebraic completion to jump into reduced schematic homotopy types \begin{equation*}
\xymatrix{
& {F(X\cup Y)^{\wedge}_{alg}} \ar@<0ex>[r]^{d_0}\ar@<-2ex>[r]^{d_1} & F(X)^{\wedge}_{alg} \ar@<-2ex>[l]_{s_0} }
\end{equation*} But the groups of $k$-points $F^{\wedge}_{alg}(k)$ here are too big for practical purposes, and we need to find reasonable approximation, which is sufficiently reach for 2-dimensional combinatorial homotopy. But old constructions of Quillen ("Quillen formula" \cite[Cor.21]{Vez}) and Magnus (see below) contain the helpful hint.
\begin{remark}[Magnus embedding] Several important homotopical invariants in the theory of groups and geometric topology defined using embedding of the group ring of a free group $F$ into the algebra of formal power series on noncommutative indeterminates:
$\mu: \mathbb{Z}F(x_1,...,x_n)\hookrightarrow \mathbb{Z}\ll t_{x_1},...,t_{x_n}\gg, \quad \mu(x_i)=1+t_{x_i},$ where $\mu(x_i^{-1})=1-t_{x_i}+t_{x_i}^2-...$
Indeed, integral coefficients do not play any principal role and we use embedding $F(x_1,...,x_n)\hookrightarrow k\ll t_{x_1},...,t_{x_n}\gg$ with coefficients in any field $k$. Now Pontryagin- Van Kampen duality gives a possibility to look at defining relations as at linear functions on a certain commutative Hopf algebra \cite[3]{Vez}.
\end{remark}
This all means, by the practical reasons, we must restrict ourselves to the prounipotent completions and to corresponding prounipotent homotopy types. In the case of 2-homotopy we will work with 2-reduced simplicial prounipotent groups
\begin{equation}
\xymatrix{
& {F(X\cup Y)^{\wedge}_u} \ar@<0ex>[r]^{d_0}\ar@<-2ex>[r]^{d_1} & F(X)^{\wedge}_u \ar@<-2ex>[l]_{s_0} }\label{eq2}
\end{equation}
Let $R_u=d_1(ker(d_0))$ be the corresponding Zarissky normal closure of defining relations and $G_u:=F_u(X)/R_u$.
\textbf{We will consider unless otherwise stated only finite presentations} $\mathbf{(|X|< \infty, |Y|< \infty )}$, because we still have no sufficient understanding of what $F(X)^{\wedge}_u$ is in infinite case.
The choice of prounipotent affine group schemes is not occasional, there is an entirely schematic explanation. In fact $(G/H)(k)\cong G(k)/H(k)$ ($char(k)=0$) and so group-theoretic settings are compatible with schematic. The simplicial identity $d_0 s_0=id_{F_u(X)}$ (saving notions) implies that (as in the discrete case) we have the formula $F_u(X\cup Y)\cong Ker d_0 \leftthreetimes s_0 F_u(X).$ Now one can collect necessary homotopical information from the study of $Ker d_0 \xrightarrow{d_1} F_u(X)$ with the action of $F_u(X)$ through $s_0$ by conjugation on $Ker d_0$. $(Ker d_0, F_u(X), d_1)$ is a particular example of a prounipotent precrossed module. The consruction of prounipotent (pre)crossed modules in general (and their category as well) is standard and we refer the reader to consult with \cite{BH}. To obtain a bridge between ordinary combinatorial group theory and prounipotent (pre)crossed modules we need the following definition due to \cite{HM,Pri,Pri2012}.

The group of $\mathbb{Q}_p$-points of any affine group scheme $G$ has the $p$-adic topology. Indeed, \cite{DM} shows that $G$ can be expressed as a filtered inverse limit $G=\varprojlim G_{\alpha}$
of linear algebraic groups. Each $G_\alpha(\mathbb{Q}_p)$ has a canonical $p$-adic topology induced by the embedding $G_{\alpha}\hookrightarrow GL_n$. Define the topology on $G(\mathbb{Q}_p)$ by
$G(\mathbb{Q}_p)=\varprojlim G_{\alpha}(\mathbb{Q}_p).$
\begin{definition}
Fix a topological group $G$ (pro-$p$-topology in our further considerations). Define $p$-adic Malcev completion of $G$ by a universal diagram where $\rho$ is continuous Zarissky-dense homomorphism of $G$ into $\mathbb{Q}_p$- points of a prounipotent affine group $G_w^{\wedge}$
$$\xymatrix@R=0.5cm{
                &         G^{\wedge}_w(\mathbb{Q}_p)  \ar[dd]^{\tau}     \\
 G \ar[ur]^{\rho} \ar[dr]_{\chi}                 \\
                &         H(\mathbb{Q}_p)              }$$
We require that for every continuous Zarissky-dense homomorphism $\chi$ into $\mathbb{Q}_p$- points of a prounipotent affine group $H$ there is a unique homomorphism $\tau$ of prounipotent groups, making the diagram commutative.
\end{definition}

\begin{remark} Finite cardinality of $Z$ is a sufficient condition to identify a free prounipotent group $F_u(Z)$ with $F(Z)^{\wedge}_w$ and $F(Z)^{\wedge}_u$. However we use different letters "u" or "w" to emphasize the source ("w"- in the case of pro-$p$-continuous Malcev completions, "u"- in the case of prouniptent presentations). If \eqref{eq2} comes from a pro-$p$-presentation (since any homomorphism of finitely generated pro-$p$-group into $\mathbb{Q}_p$-points of prounipotent group is continuous \cite[Lemma A.7]{HM}) we also have the isomorphism $G_u\cong G^{\wedge}_w$.
\end{remark}

\begin{definition}[\cite{Mikh}]
Free prounipotent (pre)crossed with a basis $Y\in G_2(\mathbb{Q}_p)$ be a prounipotent (pre)crossed module $(G_2,G_1,d)$ such that $Y$  generates a Zarissky dense $G_1(\mathbb{Q}_p)$-subgroup in $G_2(\mathbb{Q}_p)$ with the following universal property:

for any prounipotent (pre)crossed $(G'_2,G'_1,d')$ and any subset $\nu(Y) \in G'_2(\mathbb{Q}_p)$ and a function $\nu:Y \rightarrow G'_2(\mathbb{Q}_p),$ with Zarissky dense $G'_1(\mathbb{Q}_p)$-group closure and any epimorphism of prounipotent groups $f:G_1\twoheadrightarrow G'_1$, with $$fd(\mathbb{Q}_p)(Y)=d'(\mathbb{Q}_p)\nu(Y),$$ then there exists a unique homomorphism of prounipotent groups $h:G_2\rightarrow G'_2$ such that $h(\mathbb{Q}_p)(Y)=\nu(Y)$ and the pair $(h,f)$ is the homomorphism of (pre)crossed modules.

\end{definition}
\begin{lemma}[\cite{Mikh}]
$Kerd_{0} \xrightarrow{d_1} F_u(X)$ is a
free prounipotent precrossed module on $Y$, where $d_1$ comes from a presentation \eqref{eq2}.
\end{lemma}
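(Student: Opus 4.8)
The plan is to exploit the split retraction furnished by the simplicial identities, reduce the density part of the statement to the classical discrete computation, and then obtain the universal property by a formal manipulation inside a prounipotent semidirect product. Throughout write $C:=Ker d_0$, a closed normal subgroup of $F_u(X\cup Y)$.

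First I would record the precrossed module structure. Both $d_0 s_0=\mathrm{id}$ and $d_1 s_0=\mathrm{id}$ hold (check on the generators $X$), so $d_0$ is a split epimorphism and, as already noted, $F_u(X\cup Y)\cong C\rtimes s_0 F_u(X)$, with $F_u(X)$ acting on $C$ by conjugation through $s_0$. The restriction $d:=d_1|_C$ is $F_u(X)$-equivariant since $d_1(s_0(g)\,c\,s_0(g)^{-1})=g\,d_1(c)\,g^{-1}$; thus $(C,F_u(X),d)$ is a prounipotent precrossed module (no Peiffer identity is asserted). Moreover $Y\subseteq C(\mathbb{Q}_p)$ because $d_0(y)=1$.

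Second comes the density requirement: $Y$ must generate a Zariski-dense $F_u(X)(\mathbb{Q}_p)$-subgroup of $C(\mathbb{Q}_p)$. Let $F:=F(X\cup Y)\hookrightarrow F_u(X\cup Y)(\mathbb{Q}_p)$, Zariski-dense by construction of the prounipotent completion, let $d_0^{\mathrm{disc}}\colon F\to F(X)$ be the discrete retraction, and let $C^{\mathrm{disc}}:=Ker d_0^{\mathrm{disc}}$, which classically is the normal closure of $Y$ in $F$, free on $F(X)\times Y$; since $s_0(w)=w$ for $w\in F(X)$, every generator $w\,y\,w^{-1}$ lies in the $F_u(X)(\mathbb{Q}_p)$-subgroup generated by $Y$, so $C^{\mathrm{disc}}$ is contained in that subgroup. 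Hence it suffices to show $C^{\mathrm{disc}}$ is Zariski-dense in $C$. For this, its closure $\overline{C^{\mathrm{disc}}}$ is a closed subgroup of $C$ normalized by $\overline{s_0 F(X)}=s_0 F_u(X)$, so $\overline{C^{\mathrm{disc}}}\cdot s_0 F_u(X)$ is a closed subgroup of $F_u(X\cup Y)$ containing the dense subgroup $C^{\mathrm{disc}}\cdot F(X)=F$; therefore it equals $F_u(X\cup Y)=C\rtimes s_0 F_u(X)$, and uniqueness of the semidirect factorization forces $\overline{C^{\mathrm{disc}}}=C$. I expect this compatibility of prounipotent completion with the kernel of $d_0$ to be the only genuinely delicate point, and it goes through precisely because $d_0$ is split.

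Third, the universal property. Given a prounipotent precrossed module $(G_2',G_1',d')$, a function $\nu\colon Y\to G_2'(\mathbb{Q}_p)$ with Zariski-dense $G_1'(\mathbb{Q}_p)$-closure, and an epimorphism $f\colon F_u(X)\twoheadrightarrow G_1'$ with $f(d_1(y))=d'(\nu(y))$, I would use freeness of the prounipotent group $F_u(X\cup Y)$ on the finite set $X\cup Y$ to define a morphism of prounipotent groups $\Phi\colon F_u(X\cup Y)\to G_2'\rtimes G_1'$ by $x\mapsto(1,f(x))$ for $x\in X$ and $y\mapsto(\nu(y),1)$ for $y\in Y$. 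Writing $\pi\colon G_2'\rtimes G_1'\to G_1'$ for the projection, one checks on generators that $\pi\Phi=f d_0$, so $\Phi$ carries the closed subgroup $C$ into $Ker\pi\cong G_2'$, defining a morphism $h\colon C\to G_2'$ with $\Phi(c)=(h(c),1)$. Then $h(y)=\nu(y)$; and since $\Phi s_0=(g\mapsto(1,f(g)))$, conjugating gives $h(s_0(g)\,c\,s_0(g)^{-1})=f(g)\cdot h(c)$, i.e. $(h,f)$ is $F_u(X)$-equivariant. For the remaining compatibility $d'\circ h=f\circ d_1|_C$, apply the folding map $q\colon G_2'\rtimes G_1'\to G_1'$, $q(c',g')=d'(c')\,g'$, which is a group homomorphism exactly because $d'$ is $G_1'$-equivariant (the Peiffer identity is not needed, as befits a precrossed module); then $q\Phi=f d_1$ on generators, and evaluated on $C$ this reads $d'(h(c))=f(d_1(c))$. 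Finally, uniqueness: equivariance forces any competitor $h'$ to satisfy $h'(s_0(g)\,y\,s_0(g)^{-1})=f(g)\cdot\nu(y)$, so $h'$ agrees with $h$ on the $F_u(X)$-subgroup generated by $Y$, which is Zariski-dense in $C$ by the second step, and two morphisms of prounipotent groups agreeing on a Zariski-dense subgroup coincide. This establishes that $C\xrightarrow{d_1}F_u(X)$ is the free prounipotent precrossed module on $Y$.
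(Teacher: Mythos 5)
Your proposal is correct, and it follows the standard route: the paper itself only cites \cite{Mikh} for this lemma, and your argument is exactly the expected prounipotent adaptation of the classical Brown--Huebschmann proof (semidirect decomposition via the splitting $s_0$, the folding map $q(c',g')=d'(c')g'$ for the compatibility $d'h=fd_1$, and density for uniqueness). You also correctly isolate and handle the one genuinely delicate point, namely that the discrete kernel $Ker\,d_0^{\mathrm{disc}}$, i.e.\ the $F(X)$-orbit of $Y$, is Zariski dense in $Ker\,d_0$, which works because $d_0$ is split and images of morphisms of affine group schemes are closed.
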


We construct \cite{Mikh} the prouniptent crossed module $ (C_u,F_u(X),\overline{d})$ of a prounipotent presentation \eqref{eq2} as follows:
$\frac{Kerd_0}{P_u} \xrightarrow{\overline{d}} F_u(X),$ where $P_u=<{}^{\overline{d}(b)}a=bab^{-1}>$ is a Zarissky normal closure of Peiffer commutators and $\overline{d}$ arises from $d_1$ in a presentation \eqref{eq2}. The equality $P_u(A)= [Kerd_0, Kerd_1](A)$ holds for any $\mathbb{Q}_p$-algebra $A$, which certainly implies:

\begin{lemma}[\cite{Mikh}]
There is an isomorphism of prouniptent crossed modules
$(\frac{Kerd_0}{P_u}, F_u(X),\overline{d})\cong(\frac{Kerd_0}{[Kerd_0, Kerd_1]},F_u(X),\overline{d})$
\end{lemma}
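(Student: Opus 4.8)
The plan is to derive the isomorphism formally from the displayed functorial identity $P_u(A)=[Ker d_0,Ker d_1](A)$ (for every $\mathbb{Q}_p$-algebra $A$), by promoting it to an equality of closed sub-group-schemes of $Ker d_0$ and then using the fact that the whole crossed-module datum on $C_u$ is built out of the Peiffer subgroup purely as a sub-group-scheme. First I would note that both $P_u$ and $[Ker d_0,Ker d_1]$ are, by construction, closed sub-group-schemes of the prounipotent group $Ker d_0$: $P_u$ is the Zariski normal closure of the Peiffer commutators, and $[Ker d_0,Ker d_1]$ — with the standard pro-affine reading of the bracket — is the smallest closed sub-group-scheme containing the commutators $[a,b]$ with $a\in Ker d_0$, $b\in Ker d_1$. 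Each of them is therefore represented by a quotient pro-Hopf algebra of $\mathcal{O}(Ker d_0)$, equivalently by a subfunctor of $A\mapsto Ker d_0(A)$ on $\mathbb{Q}_p$-algebras, and the displayed identity says precisely that these two subfunctors coincide. Comparing the defining (pro-)Hopf ideals — the Yoneda lemma in the pro-affine category, applied by testing on the coordinate ring itself — I would conclude that $P_u=[Ker d_0,Ker d_1]$ as closed sub-group-schemes of $Ker d_0$; in particular $[Ker d_0,Ker d_1]$ turns out, like $P_u$, to be normal and $F_u(X)$-invariant (the $s_0$-conjugation action of $F_u(X)$ on $Ker d_0$ permutes Peiffer commutators, hence stabilizes $P_u$).

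Granting this scheme-level equality, I would then observe that $C_u=Ker d_0/P_u$ and $Ker d_0/[Ker d_0,Ker d_1]$ are literally the same quotient of $Ker d_0$: the quotient of a prounipotent group by a closed normal $F_u(X)$-invariant sub-group-scheme is manufactured from that sub-group-scheme alone, so the identity automorphism of $Ker d_0$ descends to a canonical isomorphism of prounipotent groups $\varphi\colon Ker d_0/P_u\xrightarrow{\sim}Ker d_0/[Ker d_0,Ker d_1]$. It remains to check that $(\varphi,\mathrm{id}_{F_u(X)})$ is a morphism of prounipotent crossed modules. For the boundary maps: both are induced by $d_1\colon Ker d_0\to F_u(X)$, which annihilates the Peiffer commutators — by the simplicial identity $d_1 s_0=\mathrm{id}_{F_u(X)}$ one has $d_1({}^{d_1(b)}a\cdot(bab^{-1})^{-1})=1$ — hence annihilates their Zariski normal closure $P_u$, and therefore factors through $Ker d_0/P_u$ and, the source being the same group, equally through $Ker d_0/[Ker d_0,Ker d_1]$; thus $\overline d\circ\varphi=\overline d$. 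For the actions: both $F_u(X)$-actions are induced by $s_0$-conjugation on $Ker d_0$ while $\varphi$ is induced by the identity of $Ker d_0$, so $\varphi$ is automatically $F_u(X)$-equivariant. Hence $(\varphi,\mathrm{id}_{F_u(X)})$ is an isomorphism of prounipotent crossed modules, which is the assertion.

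The only substantive input is the functorial commutator identity $P_u(A)=[Ker d_0,Ker d_1](A)$, which is handed to us, so what I would have to handle with some care — and what ``certainly implies'' in the statement abbreviates — is the passage from an equality of functors of points over \emph{all} $\mathbb{Q}_p$-algebras (not merely over fields, which is what makes the Yoneda argument go through at the level of schemes rather than of $\mathbb{Q}_p$-rational points) to an equality of (pro-)affine sub-group-schemes, together with its compatibility with the inverse system $Ker d_0=\varprojlim_\alpha G_\alpha$. I do not anticipate any real obstruction there beyond this bookkeeping; everything past it is formal manipulation of quotients of prounipotent groups and of the crossed-module structure maps.
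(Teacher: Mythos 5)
Your proposal is correct and follows the same route as the paper: the sole substantive input is the functorial equality $P_u(A)=[Ker d_0,Ker d_1](A)$ for all $\mathbb{Q}_p$-algebras $A$, which the paper states immediately before the lemma and from which it says the isomorphism "certainly" follows. You have merely spelled out the formal bookkeeping (Yoneda to upgrade the equality of point functors to an equality of closed sub-group-schemes, then the identity on $Ker d_0$ descending compatibly with $\overline d$ and the $s_0$-conjugation action) that the paper leaves implicit.
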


\begin{lemma}[\cite{Mikh}]
$R_u$ acts trivially on $\overline{C}_u$ , where $\overline{C}_u$ be a factor of ${C}_u$ by the derived subgroup and hence $\overline{C}_u(\mathbb{Q}_p)$ is a $\mathcal{O}(G_u)^*$-module, where $\mathcal{O}(G_u)^*$ be a Pontryagin- Van Kampen dual to the representing Hopf algebra $\mathcal{O}(G_u)$ of $G_u$.
\end{lemma}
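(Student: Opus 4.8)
The plan is to recognize the assertion as the prounipotent avatar of the classical fact that, in a crossed module $(C,G,d)$, the image $d(C)$ acts trivially on the abelianization $C/[C,C]$, and then to convert the resulting $G_u$-action on $\overline{C}_u$ into an $\mathcal{O}(G_u)^*$-module structure by Pontryagin--Van Kampen duality.

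First I would invoke the two preceding lemmas, which say that $(C_u,F_u(X),\overline{d})$ is a \emph{genuine} prounipotent crossed module with $C_u\cong\ker d_0/[\ker d_0,\ker d_1]$. In particular the Peiffer identity ${}^{\overline{d}(c)}c'=c\,c'\,c^{-1}$ holds functorially, i.e.\ as an equality of morphisms of affine $\mathbb{Q}_p$-schemes $C_u\times C_u\to C_u$; this functorial form is exactly what the equality $P_u(A)=[\ker d_0,\ker d_1](A)$, valid for every $\mathbb{Q}_p$-algebra $A$, secures. Reducing this identity modulo the derived subgroup, the image of $c\,c'\,c^{-1}$ in $\overline{C}_u$ equals the image of $c'$, so $\overline{d}(c)$ acts as the identity on $\overline{C}_u$ for every point $c$ of $C_u$; hence the restriction to the subgroup $\overline{d}(C_u)$ of the $F_u(X)$-action on $\overline{C}_u$ is trivial.

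Next I would identify $\overline{d}(C_u)$ with $R_u$. Since $\overline{d}$ is induced from $d_1$ of \eqref{eq2} via the quotient map $\ker d_0\twoheadrightarrow C_u=\ker d_0/P_u$, the scheme-theoretic image of $\overline{d}$ coincides with that of $d_1|_{\ker d_0}$; and since $d_1$ is split by $s_0$ (simplicial identity $d_1 s_0=\mathrm{id}$) it is surjective, so $d_1(\ker d_0)$ is a closed normal prounipotent subgroup of $F_u(X)$, namely $R_u$. Hence $\overline{d}(C_u)=R_u$, and combining with the previous step, $R_u$ acts trivially on $\overline{C}_u$. Consequently the $F_u(X)$-action on the pro-fd $\mathbb{Q}_p$-vector group $\overline{C}_u$ factors through $G_u=F_u(X)/R_u$ --- on $\mathbb{Q}_p$-points one uses $(G/H)(k)=G(k)/H(k)$ in characteristic zero. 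Finally a rational representation of the prounipotent group $G_u$ on $\overline{C}_u$ is the same datum as an $\mathcal{O}(G_u)$-comodule structure on $\overline{C}_u(\mathbb{Q}_p)$, which, by the Pontryagin--Van Kampen duality already used for the Magnus embedding, is the same as a module over the dual Hopf algebra $\mathcal{O}(G_u)^*$; passing to the inverse limit over the finite-dimensional subrepresentations produces the asserted pro-fd $\mathcal{O}(G_u)^*$-module.

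The main obstacle is not the formal bookkeeping but making the two scheme-theoretic inputs precise: that the Peiffer identity holds functorially (not merely on a Zariski-dense set of points) in $C_u$, and that $\overline{d}(C_u)$ equals $R_u$ on the nose rather than only up to Zariski closure. Both are exactly what the preceding lemmas --- the point-wise computation $P_u(A)=[\ker d_0,\ker d_1](A)$ and the isomorphism $(C_u,F_u(X),\overline{d})\cong(\ker d_0/[\ker d_0,\ker d_1],F_u(X),\overline{d})$ --- are designed to provide, so once those are granted the remainder is a direct application of the crossed-module axioms together with Pontryagin--Van Kampen duality.
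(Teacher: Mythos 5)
Your proposal is correct and follows exactly the route the paper's machinery is set up for: the preceding lemmas identify $C_u$ as a genuine crossed module with $P_u(A)=[\ker d_0,\ker d_1](A)$ functorially, so the Peiffer identity forces $\overline{d}(C_u)=d_1(\ker d_0)=R_u$ to act by inner automorphisms, hence trivially on the abelianization, and the $F_u(X)$-action then descends to $G_u$ and dualizes to an $\mathcal{O}(G_u)^*$-module structure. The paper defers the proof to \cite{Mikh}, but your argument is the standard one and correctly flags the only delicate points (functoriality of the Peiffer identity and exactness of images/quotients on $\mathbb{Q}_p$-points in characteristic zero).
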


\section{Quasirational presentations}
\textbf{For pro-$p$-groups, fix a prime $p>0$ throughout the paper}
(see \cite
{Se3} for details on pro-$p$-groups). \textbf{
For  discrete groups, $p$ will vary}. Let $G$ be a (pro-$p$)group which has a (pro-$p$)presentation of finite type
\begin{eqnarray}
1 \rightarrow R \rightarrow F \rightarrow G \rightarrow 1 \label{1}
\end{eqnarray}

Let $\overline{R}=R/[R,R]$ be the corresponding relation $G$-module, where $[R,R]$ is a (closed) commutator subgroup (in the
pro-$p$-case). Then denote $\mathcal{M}_n$ the corresponding Zassenhaus $p$-filtration of $F,$ which is defined by the rule $$\mathcal{M}_n=\{f \in F\mid f-1 \in {\Delta}^n_p, \quad {\Delta}_p =ker\{\mathbb{F}_pF \rightarrow \mathbb{F}_p\}\}$$ (see \cite[7.4]{Koch} for details).

\begin{definition} A presentation \eqref{1} is \textbf{quasirational} if for every $n>0$ and each prime $p>0$ the $F/R\mathcal{M}_n$-module $R/[R,R\mathcal{M}_n]$ has no $p$-torsion ($p$ is fixed for pro-$p$-groups and runs through all primes $p>0$ and corresponding $p$-Zassenhaus filtrations in the discrete case). The relation modules of such presentations will be called \textbf{quasirational relation modules}.
\end{definition}
\begin{proposition}[\cite{Mi}] Suppose \eqref{1} is a presentation of a pro-$p$-group $G$ with a single defining relation, then \eqref{1} is quasirational.
\end{proposition}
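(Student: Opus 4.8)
The plan is to reduce the statement to the structure of the relation module $\overline{R}=R/[R,R]$ of the one-relator pro-$p$-group $G$, and then to invoke the pro-$p$ analogue of Lyndon's Identity Theorem. First I would record the commutator identity $[R,R\mathcal{M}_n]=[R,R]\,[R,\mathcal{M}_n]$ (both subgroups are normal in $F$, and $[a,bc]=[a,c]\,[a,b]^{c}$ places every generator of the left side into the right side). Passing to $\overline{R}$, the image of $[R,\mathcal{M}_n]$ is generated by the elements $({}^{m}a)a^{-1}\equiv(\bar{m}-1)\bar{a}$ with $m\in\mathcal{M}_n$, $a\in R$, hence it equals $J_n\overline{R}$, where $J_n=\ker\!\big(\mathbb{Z}_p[[G]]\to\mathbb{Z}_p[[F/R\mathcal{M}_n]]\big)$ is the ideal generated by $\{\bar{m}-1:\bar{m}\in K_n\}$ and $K_n:=\mathrm{im}(\mathcal{M}_n\to G)$. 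Therefore
$$R/[R,R\mathcal{M}_n]\ \cong\ \overline{R}\,\widehat{\otimes}_{\mathbb{Z}_p[[G]]}\,\mathbb{Z}_p[[F/R\mathcal{M}_n]].$$
Since the presentation is of finite type, $\mathcal{M}_n$ is open, so $Q_n:=F/R\mathcal{M}_n$ is a finite $p$-group and $\mathbb{Z}_p[[Q_n]]=\mathbb{Z}_p[Q_n]$ is a free $\mathbb{Z}_p$-module of finite rank. It thus suffices to show that the module on the right is $\mathbb{Z}_p$-free, which for a profinite $\mathbb{Z}_p$-module is the same as having no $p$-torsion.

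Next I would compute $\overline{R}$. As $F$ is free pro-$p$ it has unique roots, so write $r=s^{p^{a}}$ with $a\ge 0$ maximal; then $s$ is not a $p$-th power. Let $C=\langle\bar{s}\rangle$ be the (finite cyclic) subgroup of $G$ generated by the image of $s$. The claim is
$$\overline{R}\ \cong\ \mathrm{Ind}_{C}^{G}\mathbb{Z}_p\ =\ \mathbb{Z}_p[[G/C]].$$
When $s\notin\Phi(F)$ --- in particular whenever $r\notin\Phi(F)$, but also e.g.\ when $r=x_1^{p^{a}}$ --- this is elementary: by the pro-$p$ Kurosh subgroup theorem $R_s:=\langle\langle s\rangle\rangle$ is free pro-$p$ on the conjugates $\{{}^{t}s\}_{t}$ over a transversal of $R_s$ in $F$, so $R$ is the normal closure in $R_s$ of the $p^{a}$-th powers of these free generators; hence $R_s/R$ is a free pro-$p$ product of copies of $\mathbb{Z}/p^{a}\mathbb{Z}$, and since the relation module of such a free pro-$p$ product is the direct sum of the induced relation modules of the factors, $\overline{R}\cong\bigoplus_{t}\mathrm{Ind}_{\langle\overline{{}^{t}s}\rangle}^{R_s/R}\mathbb{Z}_p$, which a Mackey computation along $1\to R_s/R\to G\to F/R_s\to 1$ identifies with $\mathrm{Ind}_{C}^{G}\mathbb{Z}_p$. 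When $s\in\Phi(F)$ one first needs the relation module of the one-relator pro-$p$-presentation $F/\langle\langle s\rangle\rangle$ to be free cyclic over $\mathbb{Z}_p[[F/\langle\langle s\rangle\rangle]]$ --- this is exactly the relation-module form of Lyndon's Identity Theorem in the pro-$p$ setting --- and then a pro-$p$ Frattini argument upgrades it to the statement that $R_s$ is free pro-$p$ on the full conjugacy orbit $\{{}^{g}s:g\in F/R_s\}$, so that the same free-product-of-cyclic-groups bookkeeping again yields $\overline{R}\cong\mathrm{Ind}_{C}^{G}\mathbb{Z}_p$.

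Combining the two steps,
$$R/[R,R\mathcal{M}_n]\ \cong\ \mathrm{Ind}_{C}^{G}\mathbb{Z}_p\,\widehat{\otimes}_{\mathbb{Z}_p[[G]]}\,\mathbb{Z}_p[[Q_n]]\ \cong\ \mathbb{Z}_p\big[\,K_n\backslash G/C\,\big],$$
the free $\mathbb{Z}_p$-module on the \emph{finite} set of double cosets $K_n\backslash G/C$ (finite because $[G:K_n]<\infty$). In particular it has no $p$-torsion; as $n$ --- and, in the discrete analogue, the prime $p$ --- was arbitrary, the presentation is quasirational. The one genuinely hard input is the case $s\in\Phi(F)$, i.e.\ the asphericity of one-relator pro-$p$-presentations, equivalently injectivity of the Fox-derivative map $\overline{R}\to\mathbb{Z}_p[[G]]^{d}$, $\bar{r}\mapsto(\overline{\partial r/\partial x_i})_i$: this is the pro-$p$ Identity Theorem. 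To keep the present argument self-contained one proves the two-dimensional statement actually needed here directly --- by transporting Lyndon's tower-of-subgroups rewriting to pro-$p$-groups, or by realizing $\overline{R}$ explicitly inside a noncommutative power-series module via the Magnus embedding and verifying injectivity there. The remaining ingredients --- the commutator identity, the change-of-coefficients isomorphism, the Nielsen--Schreier and Kurosh facts, and the double-coset bookkeeping --- are routine.
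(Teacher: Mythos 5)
Your reduction in the first and third paragraphs is correct and worth keeping: the identity $[R,R\mathcal{M}_n]=[R,R][R,\mathcal{M}_n]$ and the identification $R/[R,R\mathcal{M}_n]\cong\overline{R}\widehat{\otimes}_{\mathbb{Z}_p[[G]]}\mathbb{Z}_p[F/R\mathcal{M}_n]$ (coinvariants of $\overline{R}$ under $K_n=R\mathcal{M}_n/R$) are sound, and coinvariants of a permutation module $\mathbb{Z}_p[[G/C]]$ are indeed $\mathbb{Z}_p$-free on double cosets. The fatal problem is the input you feed into this reduction. The isomorphism $\overline{R}\cong\mathrm{Ind}_C^G\mathbb{Z}_p$ for an arbitrary one-relator pro-$p$-presentation is the \emph{integral} pro-$p$ Lyndon Identity Theorem, equivalently (for $r$ not a proper power) the asphericity of one-relator pro-$p$-presentations. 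This is not a "genuinely hard input" one can defer to a remark: it is an open problem --- precisely the problem of Serre \cite[10.2]{Se2} that this paper is circling, and the closing paragraph of the paper explicitly flags the \emph{probable failure} of asphericity for one-relator pro-$p$-groups in general (the "conjurings" of \cite{Mikh2016}, cf.\ \cite{Mel}). Neither of your two suggested repairs works: Lyndon's tower/staggered-presentation rewriting rests on the Magnus induction through amalgamated products and HNN extensions of one-relator groups, constructions which do not survive pro-$p$ completion (free pro-$p$ amalgams need not contain their factors, closed subgroups of free pro-$p$ products do not satisfy a Kurosh theorem, so even your $s\notin\Phi(F)$ case needs the Gildenhuys--Lim theory of free pro-$p$ groups over profinite spaces rather than a naive "free on conjugates over a transversal"); and the Magnus power-series embedding gives injectivity of $\overline{R}\to\mathbb{Z}_p[[G]]^d$ only after one already knows something like asphericity.

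The argument is also circular relative to the logic of the paper: quasirationality of one-relator pro-$p$-presentations is used here as an \emph{input} (via the Infinite Gasch\"{u}tz Theory) to prove the rational, prounipotent approximation $\overline{R_u}\cong\mathcal{O}(G_u)^{\ast}$ of the Identity Theorem; you are deriving it as an output of a strictly stronger integral statement. The proof in \cite{Mi} does not pass through asphericity at all --- it controls the $p$-torsion of the finitely many coinvariant modules $R/[R,R\mathcal{M}_n]$ directly, using that this module is cyclic over $\mathbb{Z}_p[F/R\mathcal{M}_n]$ together with homological information ($H_2$ of the finite quotients $F/R\mathcal{M}_n$ and the one-dimensionality of $H^2(G,\mathbb{F}_p)$), which is a much weaker and actually available input. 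As it stands, your proposal proves the Proposition only conditionally on an open conjecture, so it cannot be accepted.
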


Quasirational presentations  may be studied by passing to rationalized completions $$\overline{R}\widehat{\otimes} \mathbb{Q}_p=\varprojlim R/[R,R\mathcal{M}_n]\otimes \mathbb{Q}_p$$ (since $\varprojlim$ is left exact for quasirational (pro-$p$)presentations we have an embedding of abelian groups $\overline{R}\hookrightarrow \overline{R}\widehat{\otimes} \mathbb{Q}_p$). In a similar manner $\mathbb{Q}_p$-points of the abelianized continuous prounipotent completion of $$C^{\wedge}_w(\mathbb{Q}_p)=(\frac{kerd_0}{[kerd_0,kerd_1]})^{\wedge}_w(\mathbb{Q}_p)\cong (\mathbb{Q}_pG^{\wedge}_p)^{|Y|}$$
have the structure of topological $G^{\wedge}_p$-module ($\overline{R}\widehat{\otimes} \mathbb{Q}_p$ has a similar $G^{\wedge}_p$-module structure). First, we obtain $$\overline{R}\widehat{\otimes} \mathbb{Q}_p\cong \overline{R^{\wedge}_w}(\mathbb{Q}_p)$$ \cite{Mikh} and now the following description (in the spirit of Gasch\"{u}tz theory \cite{Gru}) is crucial:

\begin{theorem}[Infinite Gasch\"{u}tz Theory \cite{Mikh}]

Let \eqref{1} be finite $QR$-(pro-$p$)presentation, then there is a commutative diagram of abelian prounipotent groups, where on $\mathbb{Q}_p$-points the upper arrow is a homomorphism of topological $G^{\wedge}_p$-modules, the lower is a homomorphism of $\mathcal{O}(G_u)^*$-modules, vertical arrows are $G^{\wedge}_p$-homomorphisms on Zarissky dense subgroups $(\frac{kerd_0}{[kerd_0,kerd_1kerd_0]})^{\wedge}_p$ and $\overline{R^{\wedge}_p}$ in $\overline{C^{\wedge}_w}(\mathbb{Q}_p)$ and
$\overline{R^{\wedge}_w}(\mathbb{Q}_p)$ correspondingly
$$\xymatrix{
  \overline{C^{\wedge}_w} \ar[d]_{\kappa} \ar[r]^{\gamma} & \overline{R^{\wedge}_w} \ar[d]^{\tau} \\
  \overline{C_u} \ar[r]^{\mu} & \overline{R}_u   }$$ we denote $\overline{C^{\wedge}_w}, \overline{C_u}, \overline{R^{\wedge}_w}, \overline{R_u}$ are the abelianizations of the corresponding prounipotent groups.
  $$\overline{C_u}(\mathbb{Q}_p)\cong {\mathcal{O}(G_u)^*}^{|Y|}\cong \mathbb{Q}_p\ll G_u\gg^{|Y|}\cong \widehat{\mathcal{U}}\mathcal{P}\mathcal{O}(G_u)^*,$$ where $\widehat{\mathcal{U}},\mathcal{P}$ are the functors of completed universal enveloping algebra and Lie algebra of primitive elements of complete Hopf algebra,
$\mathbb{Q}_p\ll G_u\gg:=End_{G_u}(\mathcal{O}(G_u)).$
\end{theorem}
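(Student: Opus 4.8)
The plan is to build the square from the prounipotent crossed module $(C_u,F_u(X),\overline d)$ of the presentation \eqref{eq2} together with its continuous $p$-adic Malcev (``$w$'') analogue coming from \eqref{1}, and then to compute the top-left corner $\overline C_u$ explicitly. For the bottom row, note first that $d_1(Kerd_1)=1$, so $d_1\colon Kerd_0\to F_u(X)$ descends to $\overline d\colon C_u=Kerd_0/[Kerd_0,Kerd_1]\to F_u(X)$ with image $R_u$; abelianizing the surjection $C_u\twoheadrightarrow R_u$ gives a surjection $\mu\colon\overline C_u\to\overline R_u$, the prounipotent Gruenberg--Gasch\"{u}tz map. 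By the Lemma that $R_u$ acts trivially on $\overline C_u$, both $\overline C_u$ and $\overline R_u=R_u/[R_u,R_u]$ carry $G_u$-actions and $\mu$ is $G_u$-equivariant; on $\mathbb Q_p$-points this becomes a map of $\mathcal O(G_u)^*$-modules, $\mathcal O(G_u)^*\cong\mathbb Q_p\ll G_u\gg$ acting by convolution.

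For the top row I would run the identical construction with the $p$-adic Malcev completions: $C^{\wedge}_w=(Kerd_0/[Kerd_0,Kerd_1])^{\wedge}_w$ and $R^{\wedge}_w$ the continuous $p$-adic Malcev completion of $R$, yielding $\gamma\colon\overline{C^{\wedge}_w}\to\overline{R^{\wedge}_w}$; the already-established isomorphism $\overline{R^{\wedge}_w}(\mathbb Q_p)\cong\overline R\,\widehat\otimes\,\mathbb Q_p$ equips $\gamma$ with the structure of a map of topological $G^{\wedge}_p$-modules for the $G^{\wedge}_p$-action on $\overline R\,\widehat\otimes\,\mathbb Q_p$ noted before the theorem. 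The vertical maps come from universality: the canonical maps from the discrete (``integral'') precrossed-module data of \eqref{1} into the prounipotent ``$u$''-objects are continuous and Zariski-dense, so by the universal property defining the $p$-adic Malcev completion they factor uniquely through the ``$w$''-completions, producing $\kappa\colon\overline{C^{\wedge}_w}\to\overline C_u$ and $\tau\colon\overline{R^{\wedge}_w}\to\overline R_u$; the Remark identifying $G_u\cong G^{\wedge}_w\cong G^{\wedge}_p$ for presentations of finitely generated pro-$p$-groups makes $\kappa,\tau$ equivariant on the Zariski-dense pro-$p$-subgroups named in the statement. Commutativity $\tau\gamma=\mu\kappa$ is then checked after passing to $\mathbb Q_p$-points and restricting to the generating set $Y$, on which all four arrows are pinned down by the single rule ``generator $\mapsto$ corresponding relator''; since $Y$ is Zariski-dense this forces the square to commute.

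The main obstacle is the explicit description of $\overline C_u$. Because $(C_u,F_u(X),\overline d)$ is the \emph{free} prounipotent crossed module on $Y$ (by the earlier Lemma, the Peiffer quotient $Kerd_0/P_u$), its abelianization ought to be the free prounipotent $G_u$-module of rank $|Y|$ --- but one must verify that passing to the Peiffer quotient and then to the derived subgroup introduces no further relations, so that $\overline C_u(\mathbb Q_p)$ is genuinely $\bigl(\mathbb Q_p\ll G_u\gg\bigr)^{|Y|}$. Here $\mathbb Q_p\ll G_u\gg=End_{G_u}(\mathcal O(G_u))$ is identified with the continuous dual $\mathcal O(G_u)^*$ (endomorphisms of the regular representation $\mathcal O(G_u)$ are convolutions by distributions), and the Milnor--Moore/Cartier structure theorem for complete Hopf algebras gives $\mathcal O(G_u)^*\cong\widehat{\mathcal U}\,\mathcal P\,\mathcal O(G_u)^*$, the completed enveloping algebra of the Lie algebra of primitives, i.e.\ of $\mathrm{Lie}\,G_u$. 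Finiteness of $X$ and $Y$ is used to keep the inverse limits countable and to ensure $F_u(X)$ behaves like a free prounipotent group, while quasirationality is precisely what guarantees that the inverse limits computing $\overline R\,\widehat\otimes\,\mathbb Q_p$ remain left-exact, so that the ``integral'' pictures embed into the prounipotent ones and the comparison maps $\kappa,\tau$ do what they should.
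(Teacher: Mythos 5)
Your architecture is the one the paper's preparatory lemmas are clearly designed for: the bottom row by abelianizing the free prounipotent crossed module $(C_u,F_u(X),\overline d)$ (using the lemma that $R_u$ acts trivially on $\overline C_u$ to get the $\mathcal O(G_u)^*$-module structure), the top row from the continuous $p$-adic Malcev completion of the pro-$p$ crossed module together with $\overline R\widehat\otimes\mathbb Q_p\cong\overline{R^{\wedge}_w}(\mathbb Q_p)$, the vertical arrows from the universal property of $(-)^{\wedge}_w$, and commutativity forced on the Zariski-dense subgroup generated by $Y$. (The paper itself only cites \cite{Mikh} for the proof, so this is a comparison against the argument as reconstructable from the surrounding lemmas.) The placement of quasirationality and finiteness is also essentially right.

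The genuine gap sits exactly where you write ``one must verify'': the isomorphism $\overline{C_u}(\mathbb Q_p)\cong\left(\mathcal O(G_u)^*\right)^{|Y|}$ is the quantitative heart of the theorem, and you leave it as something that ``ought to be'' true. This is the prounipotent analogue of the Reidemeister--Whitehead fact that the abelianization of a free crossed module on $Y$ is the free module on $Y$ over the group ring of the cokernel, and it does not follow formally from freeness of the precrossed module plus triviality of the $R_u$-action. One must actually show that $\overline{C_u}$ satisfies the universal property of the free pro-finite-dimensional $\mathcal O(G_u)^*$-module on $Y$; this requires Pontryagin--Van Kampen duality to translate between abelian prounipotent groups and $\mathcal O(G_u)^*$-modules, and an identification of $Ker\,d_0$ as the free prounipotent group on the $s_0F_u(X)$-conjugates of $Y$ via the splitting $F_u(X\cup Y)\cong Ker\,d_0\leftthreetimes s_0F_u(X)$, before Milnor--Moore/Cartier can be invoked for $\mathcal O(G_u)^*\cong\widehat{\mathcal U}\mathcal P\mathcal O(G_u)^*$. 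A second, smaller unproved step: for $\kappa$ to exist by the universal property of the $w$-completion, the image of the pro-$p$ group $ker\,d_0/[ker\,d_0,ker\,d_1]$ must be Zariski-dense in $C_u(\mathbb Q_p)$; this needs exactness of prounipotent completion on the split sequence $1\to ker\,d_0\to F(X\cup Y)\to F(X)\to 1$, which you assert but do not justify.
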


\begin{remark} The celebrated Lyndon Identity theorem states, that relation modules of one-relator presentations of discrete groups are induced from cyclic subgroups, i.e. let \eqref{1} be a one-relator, then $$\overline{R}=R/[R,R]\cong \mathbb{Z}\otimes_{\langle u\rangle} \mathbb{Z}G,$$ where $R=(u^m)_F$  and $u$ is not a proper power. We provide the following analog for pro-$p$-groups with a single defining relation (i.e. for pro-$p$-groups $G$: $dim_{\mathbb{F}_p}H^2(G,\mathbb{F}_p)=1$), this is a problem \cite[10.2]{Se2}.
\end{remark}
\begin{corollary}[Identity Theorem for pro-$p$-groups \cite{Mikh2016}]
Let \eqref{1} be a one-relator finitely generated pro-p-group, then there is an isomorphism of topological $\mathcal{O}(G_u)^{\ast}-$modules $$\overline{R_u} \cong \mathcal{O}(G_u)^{\ast}$$

In particular, if $G$ embeds naturally into $\mathbb{Q}_p$-points of its prounipotent completion $G\hookrightarrow G^{\wedge}_u(\mathbb{Q}_p)$, then $cd(G)=2$.

\end{corollary}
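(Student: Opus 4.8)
The plan is as follows. Since the presentation $\eqref{1}$ has a single defining relation, $|Y|=1$, and the Infinite Gasch\"{u}tz Theorem makes $\overline{C_u}(\mathbb{Q}_p)$ the free rank-one module $A:=\mathcal{O}(G_u)^{\ast}\cong\mathbb{Q}_p\ll G_u\gg\cong\widehat{\mathcal{U}}\mathcal{P}A$. Because the crossed-module map $\overline{d}\colon C_u\to F_u(X)$ has image $R_u$, passing to abelianisations yields a surjection $\mu\colon\overline{C_u}\twoheadrightarrow\overline{R_u}$ of topological $A$-modules; hence $\overline{R_u}(\mathbb{Q}_p)$ is a cyclic $A$-module and the whole statement reduces to the injectivity of $\mu$ — equivalently, to the vanishing of the image of $\pi_{2}=\ker\overline{d}$ in $\overline{C_u}$, i.e. to the asphericity of one-relator prounipotent presentations.

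I would establish this from two inputs. First, $A=\widehat{\mathcal{U}}\mathcal{P}A$, the completed universal enveloping algebra of the pronilpotent Malcev Lie algebra of $G_u$, is an integral domain: for the augmentation filtration its associated graded ring is the enveloping algebra of a graded Lie algebra over $\mathbb{Q}_p$, hence a domain by Poincar\'{e}--Birkhoff--Witt, and $A$ is separated and complete. Second, the Fox--Gruenberg calculus carries over to the prounipotent presentation $1\to R_u\to F_u(X)\to G_u\to1$: since $\mathcal{O}(F_u(X))^{\ast}\cong\mathbb{Q}_p\ll t_x:x\in X\gg$ is a free complete associative algebra whose augmentation ideal is topologically free of rank $|X|$, one obtains a short exact sequence of topological $A$-modules
\[
0\longrightarrow\overline{R_u}\longrightarrow A^{\,|X|}\longrightarrow I_{G_u}\longrightarrow0,
\]
the first map being the Fox Jacobian $\overline{r}\mapsto\bigl(\overline{\partial r/\partial x}\bigr)_{x\in X}$ and $A^{\,|X|}=I_{F_u(X)}\,\widehat{\otimes}_{\mathcal{O}(F_u(X))^{\ast}}A$. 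Thus $\overline{R_u}$ embeds into a finitely generated free $A$-module, so it is torsion-free; and a nonzero cyclic torsion-free module $A/I$ over a domain has $I=0$ (if $0\neq a\in I$ then $a\cdot\overline{1}=0$ with $\overline{1}\neq 0$). Finally $\overline{R_u}\neq0$, because $r\neq1$ in the free pro-$p$ group $F$, which embeds into $F_u(X)(\mathbb{Q}_p)$ via the Magnus map, so $R_u$ is a nontrivial prounipotent group. Hence $\mu$ is an isomorphism and $\overline{R_u}\cong\mathcal{O}(G_u)^{\ast}$ as topological $\mathcal{O}(G_u)^{\ast}$-modules.

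For the last assertion, assume $G\hookrightarrow G^{\wedge}_u(\mathbb{Q}_p)$. The relation module $\overline{R}=R/[R,R]$ is a cyclic $\mathbb{Z}_p[[G]]$-module on $\overline{r}$, say $\overline{R}\cong\mathbb{Z}_p[[G]]/J$ with $J=\mathrm{Ann}(\overline{r})$. The functorial map $\overline{R}\to\overline{R_u}$ induced by $R\hookrightarrow F\to F_u(X)$ sends $\overline{r}$ to a free generator of $\overline{R_u}\cong A$ (this image is exactly $\mu$ of the basis element of $\overline{C_u}=A$), so the composite $\mathbb{Z}_p[[G]]\twoheadrightarrow\overline{R}\to\overline{R_u}\cong A$ coincides, up to a unit, with the canonical ring homomorphism $\mathbb{Z}_p[[G]]\to A$ and $J\subseteq\ker\bigl(\mathbb{Z}_p[[G]]\to A\bigr)$. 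The embedding hypothesis forces this homomorphism to be injective: group-like elements of the topological Hopf algebra $A$ are $\mathbb{Q}_p$-linearly independent, so $\mathbb{Z}_p[G]\hookrightarrow A$, and $G\hookrightarrow G^{\wedge}_u(\mathbb{Q}_p)$ together with quasirationality of $\eqref{1}$ (valid by the Proposition on one-relator pro-$p$-presentations) makes the passage to completions faithful. Therefore $J=0$, $\overline{R}\cong\mathbb{Z}_p[[G]]$ is free of rank one, the beginning of the Gruenberg resolution $0\to\overline{R}\to\mathbb{Z}_p[[G]]^{\,|X|}\to\mathbb{Z}_p[[G]]\to\mathbb{Z}_p\to0$ is a length-two free resolution, and so $cd(G)\le2$; since $\dim_{\mathbb{F}_p}H^{2}(G,\mathbb{F}_p)=1$ we conclude $cd(G)=2$.

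I expect the hard part to be the second input of the middle paragraph: making the Fox--Gruenberg sequence exact in the topological prounipotent setting, especially exactness on the left, which is the completed form of the identity $I_{R_u}\cap I_{R_u}I_{F_u(X)}=I_{R_u}I_{F_u(X)}$ and where quasirationality of $\eqref{1}$ has to be invoked to prevent the inverse limits from degenerating. The faithfulness of $\mathbb{Z}_p[[G]]\to\mathcal{O}(G_u)^{\ast}$ used in the last paragraph is a second, smaller point requiring care.
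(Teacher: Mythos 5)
Your route is genuinely different from the paper's: the paper splits into the degenerate case ($G_u$ free, where $R_u$ is shown to be a free precrossed module) and the proper case, where it generalizes Lubotzky--Magid via Hochschild cohomology of affine group schemes to get $\dim_k H^2(G_u,k)=1$ and $cd(G_u)=2$, and then recovers $\overline{R_u}(\mathbb{Q}_p)\cong\mathcal{O}(G_u)^*$ by Pontryagin--Van Kampen duality; you instead argue ``cyclic $+$ torsion-free over a domain $\Rightarrow$ free of rank one.'' That plan is attractive and could plausibly be carried out, but as written it is a programme rather than a proof: the load-bearing step, left exactness of $0\to\overline{R_u}\to A^{|X|}\to I_{G_u}\to 0$ in the pseudocompact/prounipotent setting (equivalently the prounipotent Magnus embedding, or the identification of $\overline{R_u}(\mathbb{Q}_p)$ with a completed $\mathrm{Tor}_1$ over $\mathcal{O}(F_u(X))^*$), is precisely the kind of statement the paper's crossed-module and Infinite Gasch\"utz machinery is built to supply, and you only assert it. Likewise, ``$\mathcal{O}(G_u)^*$ is a domain'' needs the identification of the associated graded of the (closures of) augmentation-ideal powers with $\mathcal{U}(\mathrm{gr}\,\mathfrak{g})$ together with separatedness of that filtration; plausible, but not automatic in the pro-setting.

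The genuine gap is in the second assertion. Injectivity of $\mathbb{Z}_p[[G]]\to\mathcal{O}(G_u)^*$ does not follow from linear independence of group-like elements: that only gives injectivity on the abstract group algebra $\mathbb{Q}_p[G]$, whereas the kernel you must control lives in the completed algebra, and the phrase ``quasirationality makes the passage to completions faithful'' is exactly the nontrivial comparison between $\overline{R}$, $\overline{R}\widehat{\otimes}\mathbb{Q}_p\cong\overline{R^{\wedge}_w}(\mathbb{Q}_p)$ and $\overline{R_u}$ that the paper's Infinite Gasch\"utz diagram (the maps $\kappa,\tau$ on Zariski-dense subgroups) and Mel'nikov's description of aspherical pro-$p$-groups are invoked to make precise. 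Without an actual proof of that injectivity (or of asphericity of the pro-$p$ presentation by some other means), your argument only yields $J\subseteq\ker\bigl(\mathbb{Z}_p[[G]]\to\mathcal{O}(G_u)^*\bigr)$, which gives no bound on $cd(G)$. So the point you defer as ``a second, smaller point requiring care'' is in fact the heart of the ``in particular'' clause, and it is exactly where the paper leans on its cited machinery rather than on a formal completion argument.
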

\begin{proof} There are two cases:

1. The corresponding prounipotent presentation is degenerated, i.e. $G_u$ is a free prounipotent, then we prove the isomorphism $\overline{R}_u(\mathbb{Q}_p)\cong \mathcal{O}(G_u)^*$ directly by showing that $R_u$ is the free precrossed $G_u$-module \cite{Mikh2016}.

2. In the case of proper prounipotent presentations \cite[3.10]{LM1982} we use (in \cite{Mikh2016}) the modern version of Hochschild cohomology of affine groups \cite{Jan} (a concise introduction may be found in \cite{Ha2015}) to generalize the results from \cite{LM1982,LM1985} for $k\neq \overline{k}$ and to provide the equality $dim_k H^2(G_u,k)=1$.

Further generalization of \cite[Therem 3.14]{LM1982} implies that $cd(G_u)=2$ and reproving \cite[Prop. 3.13]{LM1985} (using Pontryagin- Van-Kampen duality), we obtain $\overline{R_u}(\mathbb{Q}_p)\cong \mathcal{O}(G_u)^*$. 

Finally, Infinite Gasch\"{u}tz Theory and description of aspherical pro-$p$-groups give the final statement.
\end{proof}

 Some ideas and results discussed could be developed for a field of positive characteristic. Anyway the task to compare $p$-adic pro-algebraic and pro-$p$-completions seems very interesting, several beautiful results were obtained in \cite{Pri2012}. From this general perspective quasirationality emphasizes a space where deep interactions between positive and zero characteristics are possible. 
 
 We just mention the construction of "conjurings" \cite{Mikh2016} ("Amitsur-Lewitzky" elements of noncyclic free pro-$p$-groups, but substantially deforming $p$-power structure) elucidating probable absence of "asphericity" \cite{Mel} for one-relator pro-$p$-groups in general.

\end{document}